\documentclass[12pt]{article}
\pagestyle{empty} 
\newcommand{\reals}{{\bf R}}

\usepackage[letterpaper]{geometry}
\geometry{paper=letterpaper,centering,top=1.25in,
          bottom=1.25in,left=1.25in,right=1.25in}



\usepackage{graphics}

\usepackage{datetime2}

\usepackage{hyperref}


\usepackage{amsmath}

\usepackage{amsthm,amscd,amssymb}


\numberwithin{equation}{section} 
\newtheorem{theorem}[equation]{Theorem}

\begin{document}

\title{An estimate arising in scattering theory}

\author{ R.M.~Brown\footnote{ Russell Brown is partially supported by
  a grant from the Simons Foundation (\#422756).  } \\
  Department of Mathematics\\
  University of Kentucky \\
  Lexington, KY 40506-0027, USA
       }

\date{}

\maketitle

\begin{abstract}
We prove a decay estimate for an operator that arises in two-dimensional scattering problem. 
\end{abstract}

The goal of this  note is to prove an  estimate from a paper  of Klein, Sj\"ostrand and Stoilov \cite{KSS:2022} that is used to establish decay estimates for a scattering transform that arises in the study of one of the Davey-Stewartson II equations. Here, we only give the estimate and its proof and refer the reader to the manscript cited for an explanation of the significance of this estimate.

We consider a bounded domain in $ \reals ^2$  with a boundary that is locally the graph of a Lipschitz function. We define the operator $AB$ by
$$
ABu(z) = \frac 1 { 4 \pi ^ 2} \iint _{ \Omega \times \Omega} \frac {e^ { -k (\zeta - w) + \bar k ( \bar \zeta -\bar w ) } } { (z-\zeta) ( \bar \zeta - \bar w ) }
u ( w )\, dm (w) \,  dm(\zeta) . 
$$
The measure $ dm$ is Lebesgue measure on the plane. 
We let $ \langle x \rangle  = ( 1 + |x|^ 2) ^ { 1/2}$ and for $ \epsilon $ in $ \reals $, we 
let $ \langle \cdot \rangle^ \epsilon  L^p (\reals ^2)$ be the  collection of functions for which   $ \langle \cdot \rangle ^ { -\epsilon}  f \in L^p ( \reals ^2)$. Our estimate for the operator $AB$ is given in the following Theorem. 

\begin{theorem}\label{Main}
  The operator $AB$ satisfies the estimates 
\begin{align} 
  &\| ABu \|_{L^p }\leq C \| u \| _ { L^p}, \qquad && 2<p< \infty \\
  & 
  \| ABu \| _{\langle\cdot \rangle^ \epsilon L^p} \leq C\|u \|_ { L^p}, \qquad
  &&  1< p \leq 2,\   \epsilon > 2/p -1. 
\end{align}
\end{theorem}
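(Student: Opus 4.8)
The plan is to exploit the product structure encoded in the name $AB$: the operator factors as a composition $A\circ B$, where
$$Bu(\zeta) = \frac{1}{2\pi}\, e^{-k\zeta + \bar k \bar\zeta}\int_\Omega \frac{e^{kw - \bar k\bar w}}{\bar\zeta - \bar w}\, u(w)\, dm(w), \qquad Ag(z) = \frac{1}{2\pi}\int_\Omega \frac{g(\zeta)}{z-\zeta}\, dm(\zeta),$$
using that the exponential splits as $e^{-k(\zeta - w)+\bar k(\bar\zeta - \bar w)} = e^{-k\zeta + \bar k\bar\zeta}\,e^{kw - \bar k\bar w}$. The first observation I would record is that each exponential factor is unimodular, since $-k\zeta + \bar k\bar\zeta = -2i\,\mathrm{Im}(k\zeta)$ is purely imaginary; hence multiplication by these factors is an isometry on every $L^p$, and in particular the constants I obtain will be independent of $k$. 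Thus $B$ is, up to unimodular multipliers, the conjugate Cauchy transform on the bounded set $\Omega$, while $A$ is the solid Cauchy transform restricted to $\Omega$.

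Next I would pin down the Lebesgue mapping of the two pieces by Schur's test. Since $\Omega$ is bounded, the kernel $|\zeta - w|^{-1}$ has both $\int_\Omega |\zeta - w|^{-1}\,dm(w)$ and $\int_\Omega |\zeta - w|^{-1}\,dm(\zeta)$ bounded uniformly; interpolating the resulting $L^1\to L^1$ and $L^\infty\to L^\infty$ estimates gives $B\colon L^p(\Omega)\to L^p(\Omega)$ boundedly for all $1\le p\le\infty$. Because $ABu$ depends only on $u|_\Omega$ and $\|u\|_{L^p(\Omega)}\le \|u\|_{L^p}$, it then suffices to control $Ag$ for $g = Bu\in L^p(\Omega)$. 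I would fix $R$ with $\Omega\subset B(0,R/2)$ and split according to $|z|\le R$ and $|z|>R$. On the bounded region $|z|\le R$ the same Schur argument gives $\|Ag\|_{L^p(|z|\le R)}\le C\|g\|_{L^p(\Omega)}$, which disposes of this part in both estimates (using $\langle z\rangle^{-\epsilon}\le 1$, valid since $\epsilon>2/p-1\ge 0$ when $p\le 2$).

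The heart of the matter is the far region. For $|z|>R$ and $\zeta\in\Omega$ one has $|z-\zeta|\ge |z|/2$, so
$$|Ag(z)|\le \frac{C}{|z|}\int_\Omega |g(\zeta)|\,dm(\zeta)\le \frac{C}{|z|}\,\|g\|_{L^p(\Omega)}\le \frac{C}{|z|}\,\|u\|_{L^p},$$
the middle step being H\"older's inequality on the bounded set $\Omega$. The whole theorem then reduces to the integrability at infinity of the explicit radial weight $|z|^{-1}$ (respectively $\langle z\rangle^{-\epsilon}|z|^{-1}$). For $2<p<\infty$ one has $\int_{|z|>R}|z|^{-p}\,dm(z)<\infty$ precisely because $p>2$, yielding the first estimate; for $1<p\le 2$ one has $\int_{|z|>R}\langle z\rangle^{-\epsilon p}|z|^{-p}\,dm(z)<\infty$ precisely when $(1+\epsilon)p>2$, i.e. $\epsilon>2/p-1$, yielding the second. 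I expect no deep obstacle here: the main point to verify is that the two thresholds in the statement are exactly the borderlines for this far-field decay, and I would note in passing that the Lipschitz regularity of $\partial\Omega$ is never used—only boundedness of $\Omega$ enters. The one place demanding genuine care is the bookkeeping that keeps the near and far pieces together while preserving the uniformity of $C$ in the parameter $k$.
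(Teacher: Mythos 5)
Your proof is correct for the two inequalities exactly as printed, and it is a genuinely different and much more elementary route than the paper's. The factorization $AB=A\circ B$ with unimodular exponential factors, the Schur-test bound for $B$ on the bounded set $\Omega$, the near/far splitting for $A$, and the identification of $p>2$ and $\epsilon>2/p-1$ as precisely the integrability thresholds for $|z|^{-1}$ and $\langle z\rangle^{-\epsilon}|z|^{-1}$ at infinity are all sound; your constant is even uniform in $k$, as you note.

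What your argument forfeits, however, is the entire point of the note. Because you treat the exponentials as inert unimodular factors, your estimate holds verbatim with $k=0$, and your constant, while uniform in $k$, does not \emph{decay} in $k$. The paper instead writes $e^{-k\zeta+\bar k\bar\zeta}=-\frac 1k \frac{\partial}{\partial \zeta} e^{-k\zeta+\bar k\bar\zeta}$ and integrates by parts via Stokes' theorem, obtaining $-8i\pi^2\, ABu=\frac 1k (I+II+III)$: a principal-value term $I$ handled by the Calder\'on--Zygmund inequality composed with a Riesz potential, a solid Cauchy transform term $II$ handled by Hardy--Littlewood--Sobolev, and a boundary term $III$ handled by Adams' trace theorem for Riesz potentials restricted to Lipschitz curves. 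Each of $I$, $II$, $III$ is bounded uniformly in $k$, so the paper's bound carries the factor $|k|^{-1}$; this is the ``decay estimate'' of the title and abstract, and it is what the cited application (large-$|k|$ behavior of the reflection coefficient in Klein--Sj\"ostrand--Stoilov) actually needs---mere $k$-uniform boundedness cannot drive that argument. Your parenthetical observation that ``the Lipschitz regularity of $\partial\Omega$ is never used'' should have been a warning sign: it \emph{is} used in the paper, exactly in the boundary term $III$ that your approach never produces. So the verdict splits on how one reads the constant $C$: if it is merely required to be independent of $k$ (the literal statement), your proof is complete; if, as the paper intends, it must be $O(|k|^{-1})$, then your approach has a genuine gap that cannot be patched within it, since the oscillation you discard in the first step is precisely the source of the decay.
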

In this theorem and below, we  use  $ L^p$ for the space $L^p(\reals^2)$ and for $L^p$ spaces on other sets or measures we use $L^p(\Omega)$ or $L^p (\mu)$. 
Before giving the proof, we recall 
three estimates for operators related to the Cauchy transform on the plane. Two of these are  well-known and one  may be less familiar.
The first is  the Hardy-Littlewood-Sobolev  estimate for the first order Riesz potential $R$  on $ \reals ^2$ \cite[p.~119]{ES:1970}. We define the operator $R$  by 
$$
Rf(x) = \int _ { \reals ^ 2} \frac { f(y)}{ |x-y|}\, dm(y) . $$ 
For this operator  and for $ \tilde p \in (1,2)$,  there is a constant $C(\tilde p)< \infty $ so that 
\begin{equation}
  \label{Riesz}
\| Rf\|_ { L^p} \leq C(\tilde p) \| f\|_ { L^{\tilde p }}, \qquad 1<\tilde p < 2,\  1/p = 1/\tilde p -1/2.
\end{equation}

Next,  we recall the Calder\'on-Zygmund inequality \cite[p.~29]{ES:1970}, specialized to the principal value singular integral  operator
$$
Sf(x) = \mbox{p.v.} \int _ { \reals ^ 2 } \frac { f(\zeta)}{(z-\zeta)^2} \, dm(\zeta). 
$$
Then for $ 1<p< \infty $, we may find a constant $C(p)$ so that 
\begin{equation}
  \label{CZ}
  \| Sf \|_ {L^p}  \leq C(p) \| f \| _ { L^p }.
\end{equation}

Finally, we recall a trace theorem for Riesz potentials due to D.R.~Adams \cite{MR287301,MR336316}.   Suppose $\mu$ is a Borel measure on $ \reals ^2$  satisfying  $ \sup _ { x, r } r^ { -1}\mu ( B(x,r) ) \leq M < \infty$. Here, $B(x,r) $ is the open ball with center $x$ and radius $r$. 
Let $Rf$ be the Riesz potential  defined above, then
$$
   \| Rf \| _ { L^ {r } ( \mu) } \leq C(\tilde p, M )  \|f\| _{ L^{ \tilde p} }.
$$
   This will hold when $ 1< \tilde p < 2$ and $ 1/r  = 2/\tilde p -1$. A convenient proof of this result may be found in  the monograph of V.G.~Maz'ja \cite[p.~54]{MR817985}.  In our  application, 
    the measure $ \mu$ is arc-length on a compact Lipschitz curve  in $ \reals ^2$ which satisfies the above hypothesis. In this case, the inequality gives an estimate for the operation of restricting the potential to a Lipschitz curve. 
   
If we view $R$ as a map from $ L^{ \tilde p} ( \reals ^ 2) $ to $ L^ r ( \mu)$, then the adjoint will map $R^* : L^s(\mu) \rightarrow L^{ s/2} ( \reals ^2)$ for $ 1<s< \infty$. Combining the results for $R$ and $R^*$,  the map $ R^* \circ R $ will satisfy
\begin{equation}
  \label{DRA}
\| R^* \circ R f \|_ { L^ p }  \leq C (\tilde p)  \| f \|_ { L^ {\tilde p } }
\end{equation}
where  $ 1<\tilde p < 2 $ and $ 1 /p = 1/\tilde p -1 /2$. 

\begin{proof}[Proof of Theorem \ref{Main}] We initially assume that $u$ is in $C_c^ \infty ( \reals ^2)$ as this makes it easier to justify the calculations below.  Once the estimate of the theorem is established, an approximation argument allows us to remove the restriction on $u$.

To begin we write 
$$
-\frac 1 k  \frac \partial { \partial \zeta } e^ { -k \zeta + \bar k \bar \zeta }  = e ^{ - k \zeta + \bar k \bar \zeta }.
$$
Let $F$ be a function in $ \Omega$ and use  Stokes theorem to obtain
$$
\int _ \Omega F( \zeta ) e^ { - k\zeta + \bar k \bar \zeta } \, d\zeta \wedge d \bar \zeta  =\frac 1 k \left (
\int_ \Omega  e^ { -k \zeta + \bar k \bar \zeta } \frac \partial { \partial \zeta}  F( \zeta ) \, d\zeta \wedge d \bar \zeta
-\int _ { \partial \Omega }  e ^ { -k \zeta + \bar k \bar \zeta }F(\zeta) \, d \bar \zeta 
\right ).
$$
 We let
$$
F( \zeta ) =  \frac 1 { z- \zeta } \int _ { \Omega } \frac {u ( w) e ^ { k w- \bar k \bar w }  } { \bar \zeta - \bar w }  \, dm (w) 
$$
and obtain the following representation for $ -8i \pi ^2 ABu$ 

\begin{align*} 
  \label{OpRep}
-8i \pi ^2 ABu(z)& = \int _ \Omega \frac { e^ { - k\zeta + \bar k \bar \zeta }}
 { z- \zeta } \int _ { \Omega } \frac {u ( w) e ^ { k w- \bar k \bar w }  } { \bar \zeta - \bar w }  \, dm(w)  
 \, d\zeta \wedge d \bar \zeta
 \\
& =\frac 1 k \left (
\mbox{p.v.} \int_ \Omega \frac {  e^ { -k \zeta + \bar k \bar \zeta } } 
 { (z- \zeta)^2 } \int _ { \Omega } \frac {u ( w) e ^ { k w- \bar k \bar w }  } { \bar \zeta - \bar w }  \, dm(w) 
\, d\zeta \wedge d \bar \zeta \right . \\
&\qquad- \pi  \int _ \Omega \frac { u (\zeta ) }{ z-\zeta}\, d\zeta \wedge d \bar \zeta 
\left. - \int _ { \partial \Omega }
\frac {e ^ { -k \zeta + \bar k \bar \zeta } }
 { z- \zeta } \int _ { \Omega } \frac {u ( w) e ^ { k w- \bar k \bar w }  } { \bar \zeta - \bar w }  \, dm(w) 
 \, d \bar \zeta \right )
 \\
 & = \frac 1 k ( I + II + III) 
\end{align*}
For the term labelled $II$ we have used that $ (\pi \bar \zeta)^ { -1}$ is fundamental solution  for $ \partial/\partial \zeta$ and thus 
$$
\frac \partial { \partial \zeta } \int _  \Omega \frac { u ( w ) e^ { kw- \bar k \bar w  }}{ \bar \zeta- \bar w } \, dm(w) = -\pi u(\zeta ) 1_ \Omega (\zeta)e^ { k \zeta - \bar k \bar \zeta }.
$$
To apply the estimates (\ref{Riesz}-\ref{DRA})  we need to recall that we have
$$ \int f( \zeta )\,  d\zeta \wedge d\bar \zeta = -2i \int f(\zeta) \, dm( \zeta) .$$

We first give the proof of Theorem \ref{Main} in the case when $ p >2$. As above, we let  $ 1/p = 1/ \tilde p - 1/2$. 
For the term $I$, we  use the estimate \eqref{Riesz} twice  and H\"older's inequality   to show that 
$$ \| I \|_{ L^ p } \leq C\|1 _ \Omega R(1_\Omega u)\| _ { L^ { \tilde p} }
\leq C\| 1_ \Omega \|_{ L^2}\| R(1_ \Omega u) \|_ { L^ p }
\leq C \| 1_ \Omega \| _ { L^2  }  \|u \| _ { L^{\tilde p}( \Omega)  }.$$
  To estimate $II$, we use \eqref{Riesz} and then \eqref{CZ} to conclude that
  $$
  \| II \| _{ L^p } \leq C \|u \| _ { L^ { \tilde p } ( \Omega) }.
  $$
  Finally, using \eqref{DRA} we have
  $$
  \|III\| _ { L^ p } \leq  C \| u \| _ { L^ {\tilde p}(\Omega) }.
  $$
If $ p>2$, then we may   use H\"older's inequality to  conclude  $ \|u\|_{L^{\tilde p}( \Omega) } \leq \| 1_ \Omega \|_{L^2}   \|u\|_{L^p}$. Thus  the last three displayed inequalities  imply the estimate of Theorem \ref{Main}.

If $ 1<p \leq 2$ and $ \epsilon > 2/p-1$, we may find $r>2$ so that $\epsilon > 2 /p -2/r > 2/p -1$ and so that $1/ \tilde r = 1/r + 1/2 \leq  1/p$.  For this $r$, we may use H\"older's inequality to see that  $ L^r \subset \langle \cdot \rangle ^ \epsilon L^p$.  We use the  three estimates above with $p$  replaced by $r$ and find
$$
\| AB u \| _ { L^r } \leq \| u \| _ { L^ {\tilde r}( \Omega)}.
$$
On the left we use that $ \|AB u \| _{ \langle \cdot \rangle ^ \epsilon L^p  } \leq C \|AB u \| _ { L^r }$ and on the right that $ \| u \| _ { L ^ { \tilde r }( \Omega) } \leq |\Omega|^ { 1/ \tilde r - 1/p} \| u \| _ { L^ p } $ to obtain our Theorem in the case $ 1 < p \leq 2$. 

\end{proof}


\def\cprime{$'$}

\small \noindent 
\today
\end{document}